\theoremstyle{plain}
\newtheorem{theorem}{Theorem}[section]
\newtheorem{lemma}{Lemma}[section]
\newtheorem{definition}{Definition}[section]
\newtheorem{proposition}{Proposition}[section]
\newtheorem{remark}{Remark}[section]
\newtheorem{example}{Example}[section]
\newcommand{\beq}{\begin{equation}}
\newcommand{\eeq}{\end{equation}}
\newcommand{\beqa}{\begin{eqnarray}}
\newcommand{\eeqa}{\end{eqnarray}}
\newcommand{\beqas}{\begin{eqnarray*}}
\newcommand{\eeqas}{\end{eqnarray*}}
\newenvironment{ProofMainTheo}{{\noindent\bf Proof of Theorem 3.1.}}{\hfill$\Box$\\}
\def\min{\operatorname{min}}
\def\max{\operatorname{max}}
\renewcommand*{\@biblabel}[1]{\hfill#1.}
\begin{document}

\title{Proximal point method for a special class of nonconvex multiobjective optimization functions}

\author{
G. C. Bento\thanks{The author was partially supported by CAPES-MES-CUBA 226/2012, FAPEG 201210267000909 - 05/2012 and CNPq Grants 458479/2014-4, 471815/2012-8, 303732/2011-3, 236938/2012-6, 312077/2014-9. . IME-Universidade Federal de Goi\'as,
Goi\^ania-GO 74001-970, BR (Email: {\tt glaydston@ufg.br}) - {\bf Corresponding author}}
\and
O. P. Ferreira
\thanks{The author was partially supported by FAPEG 201210267000909 - 05/2012, PRONEX--Optimization(FAPERJ/CNPq), CNPq Grants 4471815/2012-8,  305158/2014-7. IME-Universidade Federal de Goi\'as,
Goi\^ania, GO 74001-970, BR (Email: {\tt orizon@mat.ufg.br}).}\and
V. L. Sousa Junior  \thanks{This author was partially supported by CAPES and CNPq.  IME-Universidade Federal de Goi\'as,
Goi\^ania, GO 74001-970, BR  (Email: {\tt valdinesldjs@gmail.com}).}
}
\date{}
\maketitle
\centerline{\today}
\vspace{.2cm}

\noindent
{\bf Abstract}
The proximal point method for a special class of nonconvex multiobjective functions is studied in this paper. We show that the method is well defined and that the accumulation points of any generated sequence, if any, are Pareto--Clarke critical points. Moreover, under additional assumptions, we show the full convergence of the generated sequence.

\noindent
{\bf Keywords.} Multiobjective \,$\cdot$\,Pareto-Clarke optimality\,$\cdot$\,Nonconvex optimization.

\noindent{\bf AMS Classification.} 90C30\,$\cdot$\,90C29\,$\cdot$\,90C26.


\section{Introduction}
Multiobjective optimization is the process of simultaneously optimizing two or more real-valued objective functions. Usually, no single point will minimize all the given objective functions at once (i.e., there is no ideal minimizer), and so the concept of optimality 
has to be replaced by the concept of {\it Pareto optimality} or as we will see, {\it Pareto--Clarke critical}; see~\cite{Custodio2011}. These types of problems have applications 
in the economy, industry, agriculture, and other fields; see~\cite{Galtomas1997}. \cite{Bonnel2005} considered extensions of the proximal point method to the multiobjective setting, see also, 
\cite{ApolinarioPapaQuirozOliveira2016,BaoMordukhovich2010,BaoMordukhovich2009,Bentocruzneto2014,Ceng2010,Ceng2007,Chuong2011,Villacorta2011} and references therein. 
 
Our goal is to study the proximal point method introduced in \cite{Bonnel2005} for the multiobjective problems, where each component function is lower-$C^1$, a special class of 
nonconvex functions. Over the last four decades, several authors have proposed generalized proximal point methods for certain nonconvex minimization problems. As far as we know, the first  
generalization was performed in~\cite{Fukushima1981}, see also~\cite{Kaplan1998} for a review. 
Our approach extends to the multiobjective context the results of ~\cite{Kaplan1998}. More precisely, we show that the method is 
well defined and that the accumulation points of any generated sequence, if any, are Pareto--Clarke critical for the multiobjective function. Moreover, under some 
 additional assumptions, we show the full convergence of the generated sequence.

The organization of the paper is as follows.
In Section \ref{sec1}, some notation and basic results used throughout the paper are presented. In Section \ref{sec3}, the main results are stated and   proved. Some final remarks are made in Section~\ref{sec:conclusion}.
 \section{Preliminaries}\label{sec1}
In this section, we present some basic results and definitions. 

We denote $I:=\{1,\dots,m\}$, $\mathbb{R}^{m}_{+}:=\left\{x\in\mathbb{R}^m~:~x_j\geq0, j\in I\right\}$, and $\mathbb{R}^{m}_{++}:=\left\{x\in\mathbb{R}^m~:~x_j>0, j\in I\right\}$. 
For $y, z\in\mathbb{R}^m$, $z\succeq y$ (or $y\preceq z$ ) means that $z-y\in\mathbb{R}^{m}_{+}$ and $z\succ y$ (or $y\prec z$ ) means that $z-y\in\mathbb{R}^{m}_{++}$. 
We consider the {\it unconstrained multiobjective problem}:
$\min_{x\in\mathbb{R}^n}F(x)$, where $F:\mathbb{R}^n\rightarrow\mathbb{R}^m$, with $F(x)=(f_1(x),\dots,f_m(x))$.
Given a nonempty set $C\subset \mathbb{R}^{n}$, a point $x^*\in C$ is said to be a weak Pareto solution of the problem 
$\min_w \{F(x)~:~x\in C\}$ if, and only if, there is no $x\in C$ with $F(x)\prec F(x^*)$. We denote as $\mbox{argmin}_w \{F(x)~:~x\in C\}$ the weak Pareto solutions set. In particular, when $C=\mathbb{R}^{n}$, we denote this set as $U^{*}$.
Assume that $C$ is convex. $F$ is called {$\nu$-\it strongly convex} (or simply strongly convex) on $C$, $\nu\in\mathbb{R}^m_{++}$, if, and only if, for every $x, y\in C$, 
$$
F\left((1-t)x+ty\right)\preceq(1-t)F(x)+tF(y)- \nu t(1-t)\|x-y\|^2 ,\quad t\in[0,1].
$$
$F$ is said to be convex when $\nu=0$ in the above inequality.
Note that {\it F} is convex (resp. strongly convex) if, and only if, {\it F} is component-wise convex (resp. strongly convex). Moreover, this definition generalizes the definition of a convex function in the scalar case.
The proof of the next proposition can be found in~\cite[p. 95]{DinhLuc1989}.
\begin{proposition} \label{p:iam}
If $C$ is a convex set and $F$ is a convex function, then
$$
\bigcup_{z\in\mathbb{R}^{m}_{+}\backslash\{0\}} \emph{argmin}_{x\in C}\; \langle F(x),z\rangle= \emph{argmin}_w \{F(x)~:~x\in C\}. 
$$
\end{proposition}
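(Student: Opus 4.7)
The plan is to prove the two inclusions separately; only one is delicate, and both hinge on exploiting the componentwise structure of $F$.

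For the easy inclusion $\subseteq$, I will take $x^{*}\in\argmin_{x\in C}\langle F(x),z\rangle$ for some $z\in\mathbb{R}^{m}_{+}\setminus\{0\}$ and argue by contradiction: if $x^{*}$ were not a weak Pareto solution, there would exist $x\in C$ with $f_{i}(x)<f_{i}(x^{*})$ for every $i$. Since $z\succeq 0$ and $z\neq 0$, at least one coordinate $z_{j}$ is strictly positive, and then $\langle F(x),z\rangle<\langle F(x^{*}),z\rangle$, contradicting the scalar optimality of $x^{*}$. No convexity is needed here.

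The substantive direction is $\supseteq$, which is exactly where the convexity hypotheses enter, and I expect this to be the main obstacle. Given a weak Pareto solution $x^{*}$, I need to produce a weight vector $z\in\mathbb{R}^{m}_{+}\setminus\{0\}$ that scalarizes $x^{*}$ as a minimizer. The strategy is a separation argument in the image space $\mathbb{R}^{m}$. I will form the set
\[
A=F(C)+\mathbb{R}^{m}_{+},
\]
verify that it is convex (this is where componentwise convexity of $F$ and convexity of $C$ are used: a convex combination of points $F(x_{i})+r_{i}$ equals $F(\lambda x_{1}+(1-\lambda)x_{2})+s$ with $s\in\mathbb{R}^{m}_{+}$, absorbing the Jensen slack into the nonnegative cone), and check that the weak Pareto property of $x^{*}$ forces $A$ to be disjoint from the open convex set $F(x^{*})-\mathbb{R}^{m}_{++}$. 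Indeed, any common point would give $x\in C$ with $F(x)\prec F(x^{*})$.

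A standard convex separation theorem then yields $z\in\mathbb{R}^{m}\setminus\{0\}$ and $\alpha\in\mathbb{R}$ with $\langle z,a\rangle\geq\alpha\geq\langle z,b\rangle$ on the two sets. Plugging $F(x^{*})\in A$ shows $\langle z,F(x^{*})\rangle\geq\alpha$, while $F(x^{*})$ lies in the closure of $F(x^{*})-\mathbb{R}^{m}_{++}$, giving the reverse inequality, so $\alpha=\langle z,F(x^{*})\rangle$. Testing the inequality on the half-space side with $b=F(x^{*})-s$, $s\in\mathbb{R}^{m}_{++}$, forces $\langle z,s\rangle\geq 0$ for every $s\succ 0$, which is exactly $z\in\mathbb{R}^{m}_{+}$; and testing on $A$ with $r=0$ gives $\langle F(x),z\rangle\geq\langle F(x^{*}),z\rangle$ for all $x\in C$. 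Together with $z\neq 0$ from the separation theorem, this places $x^{*}$ in $\argmin_{x\in C}\langle F(x),z\rangle$ for a valid weight $z\in\mathbb{R}^{m}_{+}\setminus\{0\}$, finishing the proof.
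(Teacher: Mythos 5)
Your proof is correct: both inclusions are argued soundly, and the separation of the convex set $F(C)+\mathbb{R}^{m}_{+}$ from the open cone $F(x^{*})-\mathbb{R}^{m}_{++}$ is exactly the standard scalarization argument. The paper itself gives no proof, deferring to Luc's monograph, and the argument found there is essentially the one you have written out.
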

If $m=1$, $f$ is $L$-strongly convex on $\Omega\subset\mathbb{R}^n$ with constant $L$ if, and only if,
\begin{equation}\label{eqq12}
\langle u-v,x-y\rangle\geq L \|u-v\|^2,\quad  u\in\partial f(x),\quad  v\in\partial f(y), 
\end{equation}
whenever $x, y \in \Omega$, where $\partial f$ denotes the subdifferential.
\begin{remark}\label{remarksubconvex}
 Let $f_1, f_2:\mathbb{R}^n\to \mathbb{R}$ be convex on $\Omega$. Thus, $\partial f_{1}(x)$ and $\partial f_{2}(x)$ are nonempty, convex, and compact for $x\in \Omega$. Moreover, if $\lambda_1, \lambda_2\geq 0$ then
$
\partial (\lambda_1f_1+\lambda_2f_2)(x)=\lambda_1\partial f_1(x)+\lambda_2 \partial f_2(x),
$ for $x\in \Omega$; see~\cite[Theorem 23.8]{Rockafellar1970}.
\end{remark}
Let $C\subset\mathbb{R}^n$ be nonempty, closed, and convex. The normal cone is defined by 
\begin{equation}  \label{remark1}
 N_C(x):=\{v\in\mathbb{R}^n~:~\langle v,y-x\rangle\leq0,~ y\in C\}.
\end{equation}
\begin{remark}  \label{rm.2015}
If $g:\mathbb{R}^{n}\to\mathbb{R}$ is convex, then the first-order optimality condition for $\min_{x\in C}g(x)$ is $0\in \partial g(x)+N_{C}(x)$. 
If $g$ is the maximum of a finite collection of continuously differentiable functions, then this constraint qualification is satisfied.
\end{remark}

 Let $f:\mathbb{R}^n\rightarrow\mathbb{R}$ be locally Lipschitz at $x\in \mathbb{R}^n$ and $d\in\mathbb{R}^n$. The Clarke directional derivative \cite[p. 25]{Clarke1990} of $f$ at $x$ in the direction $d$ is defined as 
$$
f^{\circ}(x,d):=\displaystyle\limsup_{t\downarrow0~y \rightarrow x}\frac{f(y+td)-f(y)}{t}, 
$$
and the Clarke subdifferential of $f$ at $x$, denoted by $\partial^{\circ}f(x)$, is defined as
$$
\partial^{\circ}f(x):=\left\{w\in\mathbb{R}^n~:~\langle w,d\rangle\leq f^{\circ}(x,d),~\forall~d\in\mathbb{R}^n\right\}.
$$
 
The previous definition can be found in \cite[p. 27]{Clarke1990}. If $f$ is convex, $f^{\circ}(x,d)=f'(x,d)$, where $f'(x,d)$ is the usual directional derivative. Moreover, $\partial^{\circ}f(x)=\partial f(x)$ for all $x\in\mathbb{R}^n$; see~\cite[Proposition 2.2.7]{Clarke1990}.
The next lemmas can be found in ~\cite[p. 39]{Clarke1990} 
\begin{lemma}\label{lema234556} Let $\Omega\subset\mathbb{R}^n$ be open and convex. If $f:\mathbb{R}^n\rightarrow\mathbb{R}$ is locally Lipschitz on $\Omega$ and $g:\mathbb{R}^n\rightarrow\mathbb{R}$ is convex on $\Omega$, then $(f+g)^{\circ}(x,d)=f^{\circ}(x,d)+g'(x,d)$ 
for each $ x\in\Omega$ and $d\in\mathbb{R}^n$. Consequently, if $g:\mathbb{R}^n\rightarrow\mathbb{R}$ is continuously differentiable on $\Omega$,
$ \partial^{\circ}(f+g)(x)=\partial^{\circ}f(x)+{\rm{grad}}\,g(x)$ for each $ x\in\Omega.$
\end{lemma}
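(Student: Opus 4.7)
The plan is to establish the directional-derivative identity first and then deduce the subdifferential identity as its consequence. A preliminary observation is that since $g$ is convex on the open set $\Omega$, it is automatically locally Lipschitz on $\Omega$, so $f+g$ is locally Lipschitz and the Clarke directional derivative $(f+g)^{\circ}(x,d)$ is well-defined at every $x\in\Omega$. Moreover, convexity of $g$ yields the identity $g^{\circ}(x,d)=g'(x,d)$ from the property already cited after the definition of the Clarke subdifferential (\cite[Proposition 2.2.7]{Clarke1990}); in particular, the classical directional derivative $g'(x,d)$ exists in every direction $d\in\mathbb{R}^n$.

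For the inequality $(f+g)^{\circ}(x,d)\leq f^{\circ}(x,d)+g'(x,d)$, I would split the difference quotient
\[
\frac{(f+g)(y+td)-(f+g)(y)}{t}=\frac{f(y+td)-f(y)}{t}+\frac{g(y+td)-g(y)}{t},
\]
and apply subadditivity of $\limsup$ under the joint limit $y\to x$, $t\downarrow 0$. The first $\limsup$ is, by definition, $f^{\circ}(x,d)$, and the second equals $g^{\circ}(x,d)=g'(x,d)$ by the preliminary observation.

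For the reverse inequality, I would pick a sequence $(y_n,t_n)\to(x,0^{+})$ that realizes $f^{\circ}(x,d)$, namely $\frac{f(y_n+t_n d)-f(y_n)}{t_n}\to f^{\circ}(x,d)$. Along this same sequence I would exploit the convexity of $g$: the monotonicity in $t>0$ of the difference quotient of a convex function gives $\frac{g(y_n+t_n d)-g(y_n)}{t_n}\geq g'(y_n,d)$, and then the regularity of $g'(\cdot,d)$ inherited from convexity (upper semicontinuity of the subdifferential map of a convex function together with the identity $g'(y,d)=\max_{v\in\partial g(y)}\langle v,d\rangle$) is used to push the limit to $g'(x,d)$. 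Summing and taking $\liminf$ in the split identity yields $(f+g)^{\circ}(x,d)\geq f^{\circ}(x,d)+g'(x,d)$.

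For the consequence when $g$ is continuously differentiable, $g'(x,d)=\langle\grad g(x),d\rangle$, so the identity reads $(f+g)^{\circ}(x,d)=f^{\circ}(x,d)+\langle\grad g(x),d\rangle$ for every direction $d$. Since the Clarke subdifferential is characterized by $\partial^{\circ}h(x)=\{w\in\mathbb{R}^n:\langle w,d\rangle\leq h^{\circ}(x,d),\ \forall d\}$ and $h^{\circ}(x,\cdot)$ is sublinear, standard polar/support-function duality for sublinear functionals translates the pointwise identity into $\partial^{\circ}(f+g)(x)=\partial^{\circ}f(x)+\grad g(x)$. The main obstacle is the reverse inequality above: one must carefully handle the $g$-quotient along the sequence selected only to realize $f^{\circ}(x,d)$, and convexity of $g$ must be brought in through semicontinuity of $g'(\cdot,d)$; this is the delicate point where mere local Lipschitzness of $g$ would be insufficient.
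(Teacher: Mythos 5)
The paper does not prove this lemma at all; it simply cites \cite[p.~39]{Clarke1990}, so your proposal is measured against the standard argument there (Proposition~2.3.3 and its corollaries). Your ``$\leq$'' direction and your final support-function step (two compact convex sets with equal support functions coincide) are fine. The genuine gap is in the reverse inequality, and it cannot be repaired, because the first assertion of the lemma is in fact false for merely convex $g$: take $n=1$, $f=-|\cdot|$, $g=|\cdot|$, $x=0$, $d=1$; then $(f+g)^{\circ}(0,1)=0$ while $f^{\circ}(0,1)+g'(0,1)=1+1=2$. Only ``$\leq$'' survives in that generality.

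The precise point where your argument breaks is the passage from $g'(y_n,d)$ to $g'(x,d)$. The map $y\mapsto g'(y,d)=\inf_{t>0}\bigl(g(y+td)-g(y)\bigr)/t$ is an infimum of continuous functions, hence \emph{upper} semicontinuous, and outer semicontinuity of $\partial g$ together with $g'(y,d)=\max_{v\in\partial g(y)}\langle v,d\rangle$ likewise yields only $\limsup_n g'(y_n,d)\leq g'(x,d)$. What your chain of inequalities requires is the opposite bound $\liminf_n g'(y_n,d)\geq g'(x,d)$, i.e.\ lower semicontinuity, which fails for nonsmooth convex $g$ (with $g=|\cdot|$, $d=1$, $y_n=-1/n$ one has $g'(y_n,1)=-1$ but $g'(0,1)=1$). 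The hypothesis that actually makes the equality work is strict differentiability of $g$ at $x$ --- in particular $g\in C^{1}$, the only case the paper ever uses, since there $g$ is the quadratic $\tfrac{\lambda_k}{2}\langle e^k,z^k\rangle\|\cdot-x^k\|^2$. Under that hypothesis the mean value theorem gives $\bigl(g(y+td)-g(y)\bigr)/t=\langle\grad g(\xi),d\rangle$ with $\xi\in[y,y+td]$, so the $g$-quotient converges to $\langle\grad g(x),d\rangle$ as a genuine limit as $y\to x$, $t\downarrow 0$; the $\limsup$ of the sum then splits exactly in both directions, and your concluding duality step correctly yields $\partial^{\circ}(f+g)(x)=\partial^{\circ}f(x)+\grad g(x)$. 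You correctly flagged the reverse inequality as the delicate point, but the fix you propose invokes semicontinuity in the wrong direction.
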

\begin{lemma}\label{lemma2233} 
Let $\Omega\subset\mathbb{R}^n$ be open and convex. Let $f_i:\mathbb{R}^n\rightarrow\mathbb{R}$ be a continuously 
differentiable function on $\Omega$, $i\in I$. Define $f(x):=\max_{i\in I}f_i(x)$,
and $I(x):=\{i\in I:f_i(x)=f(x)\}$.
Then, \emph{\textbf{(a)}} $f$ is locally Lipschitz on $\Omega$ and 
$\mbox{conv}\{\emph{grad}f_i(x):i\in I(x)\}\subset\partial^{\circ}f(x)$, $x\in\Omega$;
\emph{\textbf{(b)}} if $f_i:\mathbb{R}^n\rightarrow\mathbb{R}$ is differentiable and convex on $\Omega$, $i\in I$, then
$\partial f(x)=\mbox{conv}\{\emph{grad}f_i(x):i\in I(x)\}$. In particular, $x$ minimizes $f$ on $\Omega$ if, and only if, there exists $\alpha_i\geq0$, $i\in I(x)$, such that
$0=\sum_{i\in I(x)}\alpha_i\,{\rm grad}\,f_i(x)$ and $\sum_{i\in I(x)}\alpha_i=1$;
\emph{\textbf{(c)}} if $f_i:\mathbb{R}^n\rightarrow \mathbb{R}$ is $L_i$-strongly convex, for $i\in I$, $f$ is $\min_{i\in I}L_i$ strongly convex. 
\end{lemma}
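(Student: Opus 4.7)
The plan is to treat (a), (b), and (c) in turn.

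For (a), each $f_i$ is $C^1$, hence locally Lipschitz on $\Omega$, and a pointwise maximum of finitely many locally Lipschitz functions is locally Lipschitz (use $|\max_i a_i - \max_i b_i|\leq\max_i|a_i-b_i|$). For the subdifferential inclusion I would fix $x\in\Omega$, $i\in I(x)$, and $d\in\mathbb{R}^n$ and test the Clarke $\limsup$ along the particular path $y\equiv x$, $t\downarrow 0$: using $f_i(x)=f(x)$ and $f_i(x+td)\leq f(x+td)$,
\[
f^{\circ}(x,d)\geq\limsup_{t\downarrow 0}\frac{f(x+td)-f(x)}{t}\geq\lim_{t\downarrow 0}\frac{f_i(x+td)-f_i(x)}{t}=\langle\grad f_i(x),d\rangle.
\]
Since this holds for every $d$, $\grad f_i(x)\in\partial^{\circ}f(x)$, and convexity of the Clarke subdifferential extends the containment to the convex hull.

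For (b), convexity of the $f_i$'s makes $f$ convex, so $\partial^{\circ}f=\partial f$ and the direction ``$\supseteq$'' is supplied by (a). For the reverse inclusion I would first establish the identity
\[
f'(x,d)=\max_{i\in I(x)}\langle\grad f_i(x),d\rangle
\]
by bounding ``$\geq$'' via the estimate already obtained in (a), and proving ``$\leq$'' by noting that continuity of the $f_j$'s together with the strict gap $f_j(x)<f(x)$ for $j\notin I(x)$ forces $I(x+td)\subseteq I(x)$ for all sufficiently small $t>0$; hence $f(x+td)=f_{j(t)}(x+td)$ for some $j(t)\in I(x)$, and a mean-value argument on $f_{j(t)}$ delivers the upper bound. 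With this identity in hand, any $w\in\partial f(x)$ lying outside $\conv\{\grad f_i(x):i\in I(x)\}$ would, by strict separation of a point from a compact convex set, yield a $d$ with $\langle w,d\rangle>\max_{i\in I(x)}\langle\grad f_i(x),d\rangle=f'(x,d)$, contradicting $w\in\partial f(x)$. The optimality characterization then follows by translating $0\in\partial f(x)$ through this equality.

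For (c), I would set $L:=\min_{i\in I}L_i$ and, for every $i\in I$, $x,y\in\Omega$, $t\in[0,1]$, chain the $L_i$-strong convexity of $f_i$ with $L_i\geq L$ and $f_i\leq f$ to obtain
\[
f_i((1-t)x+ty)\leq (1-t)f_i(x)+tf_i(y)-L_i t(1-t)\|x-y\|^2\leq (1-t)f(x)+tf(y)-Lt(1-t)\|x-y\|^2,
\]
and then take the maximum over $i$ on the left-hand side. The main obstacle is the ``$\leq$'' half of the directional-derivative identity used in (b); the localization $I(x+td)\subseteq I(x)$ is where care is required, whereas the remaining pieces of (a), (b), and (c) are essentially bookkeeping.
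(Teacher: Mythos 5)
Your argument is correct, but it is worth noting that the paper does not actually prove this lemma: items (a) and (b) are delegated to citations (Bertsekas, Proposition~4.5.1, and M\"akel\"a--Neittaanm\"aki, p.~49), and (c) is dismissed as following from the definition. So your proposal is a genuinely self-contained replacement rather than a variant of the paper's route. Each step checks out: in (a), restricting the Clarke $\limsup$ to the path $y\equiv x$ legitimately lower-bounds $f^{\circ}(x,d)$ by $\langle\grad f_i(x),d\rangle$ for $i\in I(x)$, and convexity of $\partial^{\circ}f(x)$ upgrades the containment to the convex hull; in (b), the localization $I(x+td)\subseteq I(x)$ for small $t>0$ is exactly the right (and only delicate) point, it holds by continuity since $f_j(x)<f(x)$ for $j\notin I(x)$, and once you have $f(x+td)=\max_{i\in I(x)}f_i(x+td)$ near $t=0$ the identity $f'(x,d)=\max_{i\in I(x)}\langle\grad f_i(x),d\rangle$ and the strict-separation argument give the reverse inclusion (you could even skip the mean-value step by writing $f(x+td)-f(x)=\max_{i\in I(x)}\bigl(f_i(x+td)-f_i(x)\bigr)$ and passing to the limit termwise over the finite index set); and (c) is the straightforward chaining you describe, matching the paper's secant-type definition of $\nu$-strong convexity. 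What your approach buys is a proof readable without consulting two external monographs; what the paper's approach buys is brevity and a pointer to statements proved in greater generality (e.g., Danskin-type results for non-finite index sets).
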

\begin{proof} 
The proofs of items (a) and (b) can be found in \cite[Proposition~{4.5.1}]{Bertsekasbook2003} and \cite[p. 49]{Maquela1992}, respectively. The proof of item (c) follows from the definition of a strongly convex function.
\end{proof}
\begin{definition} Let $F=(f_1,\dots,f_m)^{T}:\mathbb{R}^n\rightarrow\mathbb{R}^m$ be locally Lipschitz on $\mathbb{R}^n.$ We say that $x^*\in\mathbb{R}^n$ is a Pareto--Clarke critical point of $F$ if, for all directions $d\in\mathbb{R}^n$, there exists $i_0=i_0(d)\in\{1,\dots,m\}$, such that $f^{\circ}_{i_0}(x^*,d)\geq0.$
\end{definition}
\begin{remark} 
The previous definition can be found in \cite{Custodio2011}. When $m=1$, the last definition
 becomes the classic definition of the critical point for the nonsmooth convex function.
The last definition generalizes, for nonsmooth multiobjective optimization,
the condition
$
\mbox{Im}\left(JF(x^*)\right)\cap\left(-\mathbb{R}^{m}_{++}\right)=\emptyset,
$
 which characterizes a Pareto
critical point when $F$ is continuously differentiable. 
\end{remark}

\section{Proximal Algorithm for Multiobjective Optimization}\label{sec3}
In this section, we present a proximal point method to minimize a nonconvex function $F$, where its component is given by the maximum of continuously differentiable functions. Our goal is to prove the following theorem:
\begin{theorem}\label{theorem1} Let $\Omega\subset\mathbb{R}^n$ be open and convex and $I_j:=\{1,\ldots,\ell_j\}$, with $ \ell_j \in \mathbb{Z}_+$. Let $F(x):=(f_1(x),\ldots,f_m(x))$, where
$
f_j(x):=\max_{i\in I_j}f_{ij}(x)$, $j\in{I},
$ 
and $f_{ij}:\mathbb{R}^n\rightarrow\mathbb{R}$ is a continuously differentiable function on $\Omega$ and continuous on $\bar {\Omega}$, for all $i\in I_j$. Assume that for all $j\in{I}$, $-\infty<\inf_{x\in\mathbb{R}^n} f_j(x)$, ${\rm {grad}}f_{ij}$ is Lipschitz on $\Omega$ with constant $L_{ij}$ for each $i\in I_j$ and
$S_F(F(\bar y)):=\left\{x\in\mathbb{R}^n~:~ F(x)\preceq F(\bar{y})\right\}\subset\Omega$, for some $\bar y\in \mathbb{R}^n$.
Let $\bar{\lambda}>0$ and $\bar{\mu}>0$, such that $ \bar{\mu}<1$. Take  $\{e^k\}\subset \mathbb{R}^m_{++}$ and $\{\lambda_k\}\subset \mathbb{R}_{++}$ satisfying 
\begin{equation}\label{lambda}
\|e^k\|=1, \quad \bar{\mu}<e^k_j, \quad \frac{1}{\bar{\mu}}\max_{i\in I_j}L_{ij}<\lambda_k \leq\bar{\lambda},   \quad j\in {I}, \quad k=0, 1,\ldots.
\end{equation}
Let $\hat{x}\in S_F(F(\bar{y}))$. If $\Omega_k:=\{x\in\mathbb{R}^n ~:~ F(x)\preceq F(x^k)\}$, then 
\begin{equation}\label{method}
x^{k+1}\in\emph{argmin}_{w}\left\{F(x)+\frac{\lambda_k}{2}\|x-x^k\|^2e^k  ~:~ x\in\Omega_k\right\},\quad k=0,1,\ldots,
\end{equation}
starting with $x^0=\hat{x}$ is well defined, the generated sequence $\{x^k\}$ rests in $S_F(F(\bar{y}))$ and any accumulation point of $\{x^k\}$ is a Pareto--Clarke critical point of $F$, as long as $\Omega_k$ is convex, for each $k$. 
\end{theorem}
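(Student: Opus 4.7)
The plan is to carry out three steps in order: verify that each subproblem has a nonempty solution set (well-definedness), establish the step-size estimate $\|x^{k+1}-x^k\|\to 0$, and deduce Pareto--Clarke criticality of accumulation points by a direct contradiction argument.

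For well-definedness and feasibility, the core observation is that the hypothesis $\lambda_k\bar\mu>\max_{i\in I_j}L_{ij}$ combined with $e_j^k>\bar\mu$ gives $\lambda_ke_j^k>L_{ij}$, and hence each augmented scalar $f_{ij}(x)+(\lambda_k/2)e_j^k\|x-x^k\|^2$ is strongly convex on $\Omega$ with modulus $\lambda_ke_j^k-L_{ij}>0$; this uses that a function with $L$-Lipschitz gradient on a convex open set satisfies $\langle \nabla f(x)-\nabla f(y),x-y\rangle\geq -L\|x-y\|^2$. Taking the maximum over $i$ preserves strong convexity by Lemma \ref{lemma2233}(c), so for any $z$ in the unit simplex the scalarization
\[
G_k^z(x):=\sum_{j\in I}z_j\Bigl[f_j(x)+\tfrac{\lambda_k}{2}e_j^k\|x-x^k\|^2\Bigr]
\]
is strongly convex with uniform modulus $\mu\geq \lambda_k\bar\mu-\max_{i,j}L_{ij}>0$. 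Since $\Omega_k$ is closed (continuity of $f_j$), convex by assumption, and contained in $S_F(F(\bar y))\subset\Omega$ (by induction from $x^0=\hat x$), $G_k^z$ attains its minimum on $\Omega_k$. Each augmented component is convex, so Proposition \ref{p:iam} identifies these scalarization minimizers with the weak Pareto set, proving the iteration is well defined and, by induction on $x^{k+1}\in\Omega_k$, that $\{x^k\}\subset S_F(F(\bar y))$.

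For the step-size estimate, Proposition \ref{p:iam} supplies some $z^k$ in the unit simplex with $x^{k+1}=\operatorname{argmin}_{x\in\Omega_k}G_k^{z^k}$. I plan to avoid KKT by using strong convexity directly: since $x^k\in\Omega_k$, the segment $(1-t)x^{k+1}+tx^k$ lies in $\Omega_k$ and minimality at $x^{k+1}$ together with strong convexity of $G_k^{z^k}$ yields, after sending $t\downarrow 0$,
\[
G_k^{z^k}(x^k)-G_k^{z^k}(x^{k+1})\geq \mu\|x^{k+1}-x^k\|^2.
\]
The regularizer vanishes at $x^k$, so rearranging yields $\sum_j z_j^k[f_j(x^k)-f_j(x^{k+1})]\geq C\|x^{k+1}-x^k\|^2$ with $C>0$. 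Since each $\{f_j(x^k)\}$ is nonincreasing and bounded below, every increment tends to zero, and the convex-combination bound forces $\|x^{k+1}-x^k\|\to 0$.

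Finally, let $\bar x$ be an accumulation point along $x^{k_\ell}\to\bar x$; by the previous step, $x^{k_\ell+1}\to\bar x$ as well. Assume for contradiction that $\bar x$ is not Pareto--Clarke critical, so there exists $d\in\mathbb{R}^n$ with $\alpha_j:=-f_j^{\circ}(\bar x,d)>0$ for every $j\in I$. Unpacking the $\limsup$ definition of $f_j^{\circ}$, pick $\epsilon\in(0,\min_j\alpha_j)$ and $\delta,T>0$ so that
\[
f_j(y+td)-f_j(y)\leq t(-\alpha_j+\epsilon),\quad y\in B(\bar x,\delta),\ t\in(0,T],\ j\in I.
\]
Fix $t$ in that range. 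For $\ell$ large, $x^{k_\ell+1}\in B(\bar x,\delta)$, so $y^\ell:=x^{k_\ell+1}+td$ satisfies $f_j(y^\ell)<f_j(x^{k_\ell+1})\leq f_j(x^{k_\ell})$ and hence $y^\ell\in\Omega_{k_\ell}$. The regularizer difference
\[
\tfrac{\lambda_{k_\ell}}{2}e_j^{k_\ell}\bigl[\|y^\ell-x^{k_\ell}\|^2-\|x^{k_\ell+1}-x^{k_\ell}\|^2\bigr]=\tfrac{\lambda_{k_\ell}}{2}e_j^{k_\ell}\bigl[2t\langle x^{k_\ell+1}-x^{k_\ell},d\rangle+t^2\|d\|^2\bigr]
\]
is bounded by $(\bar\lambda/2)[2t\|x^{k_\ell+1}-x^{k_\ell}\|\,\|d\|+t^2\|d\|^2]=O(t^2)+o(1)$ as $\ell\to\infty$. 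Shrinking $t$ so the $O(t^2)$ term is dominated by $t\min_j\alpha_j/2$ and then taking $\ell$ large to absorb the $o(1)$, every component of $F(y^\ell)+(\lambda_{k_\ell}/2)\|y^\ell-x^{k_\ell}\|^2e^{k_\ell}$ becomes strictly less than the corresponding component at $x^{k_\ell+1}$, contradicting the weak Pareto optimality of $x^{k_\ell+1}$. The main obstacle I anticipate is the uniform-in-$y$ version of the Clarke inequality displayed above, which requires working from the $\limsup$ definition rather than merely from pointwise upper semicontinuity at $\bar x$.
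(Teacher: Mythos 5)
Your proposal is correct in substance but follows a genuinely different route from the paper. The paper's proof is a KKT/subdifferential argument: it writes the first-order optimality condition $0\in\partial\psi_k(x^{k+1})+N_{\Omega_k}(x^{k+1})$ for the scalarized subproblem, decomposes the subdifferential of each $f_j+\tfrac{\lambda_k}{2}e^k_j\|\cdot-x^k\|^2$ via Lemma \ref{lemma2233}(b) into convex combinations of gradients $\operatorname{grad}f_{ij}$, extracts convergent subsequences of the weights $z^k$, $\alpha^{k+1}_{ij}$, the normal vectors $v^{k+1}$, and the active index sets, passes to the limit, and finally pairs the limiting relation with the direction $d$ to contradict $f_j^{\circ}(\bar x,d)<0$ for all $j$. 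You avoid all of this machinery: you use only the definition of weak Pareto optimality of $x^{k+1}$ and the uniform (in $y$ and $t$) upper estimate $f_j(y+td)-f_j(y)\le t(f_j^{\circ}(\bar x,d)+\epsilon)$ that comes straight from the $\limsup$ definition of the Clarke derivative, building an explicitly feasible competitor $y^\ell=x^{k_\ell+1}+td$ that strictly dominates $x^{k_\ell+1}$ in every component of the regularized objective. This is more elementary and, notably, does not use the max-of-$C^1$ structure of the $f_j$ in the criticality step at all (only local Lipschitzness); the structure enters only through strong convexity of the subproblems. Your argument also makes explicit a step the paper glosses over: both proofs need $x^{k_s+1}\to\bar x$ (the paper needs it so that the gradient terms and the residual $\lambda_{k_s}e_j^{k_s}(x^{k_s+1}-x^{k_s})$ behave in the limit), and you actually prove $\|x^{k+1}-x^k\|\to 0$, whereas the paper simply asserts the convergence of the relevant subsequences.

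One small repair is needed in your step-size estimate. You drop the regularizer at $x^{k+1}$ and keep only the strong-convexity modulus $\mu\ge\lambda_k\bar\mu-\max_{i,j}L_{ij}$; under \eqref{lambda} this quantity is positive for each $k$ but need not be bounded away from zero (take $\lambda_k\downarrow\tfrac{1}{\bar\mu}\max_{i,j}L_{ij}$), so the inequality $\sum_jz^k_j[f_j(x^k)-f_j(x^{k+1})]\ge C\|x^{k+1}-x^k\|^2$ with a single $C>0$ does not follow as written. The fix is to keep the quadratic term of $G_k^{z^k}$ at $x^{k+1}$, which gives the coefficient $\mu+\tfrac{\lambda_k}{2}\langle e^k,z^k\rangle\ge\tfrac{\lambda_k\bar\mu}{2}>\tfrac12\max_{i,j}L_{ij}$; this is bounded below by a positive constant whenever some $L_{ij}>0$ (the genuinely nonconvex case the theorem is aimed at), and is in fact the same estimate the paper uses later in Lemma \ref{eq:c2}. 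With that adjustment the argument goes through.
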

In order to prove the above theorem we need some preliminaries. Hereafter, we assume that all the assumptions
 of Theorem~\ref{p:iam} hold. We start proving the well-definedness of the sequence in \eqref{method}.
\begin{proposition} \label{pr:wdef}
 The proximal point method \eqref{method} applied to $F$ with starting point $x^0=\hat{x}$ is well defined.
\end{proposition}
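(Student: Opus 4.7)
The plan is to induct on $k$ and, at each step, reduce the vector-valued weak Pareto minimization to a scalar minimization via Proposition~\ref{p:iam}, then exploit strong convexity and coercivity to guarantee existence of a minimizer. Concretely, assuming $x^k \in S_F(F(\bar y)) \subset \Omega$ is well defined, I want to show the subproblem in \eqref{method} admits a solution and that its solution again lies in $S_F(F(\bar y))$.

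The key technical step is to verify that the regularized objective
\[
\tilde F(x) := F(x) + \tfrac{\lambda_k}{2}\|x-x^k\|^2 e^k
\]
is strongly convex on $\Omega$. Writing the $j$-th component as
\[
\tilde f_j(x) = \max_{i\in I_j}\Bigl(f_{ij}(x) + \tfrac{\lambda_k e^k_j}{2}\|x-x^k\|^2\Bigr),
\]
the choice of parameters in \eqref{lambda} gives $\lambda_k e^k_j > \bar\mu \cdot \tfrac{1}{\bar\mu}\max_i L_{ij} = \max_i L_{ij} \geq L_{ij}$. Because $f_{ij}$ is $C^{1,1}$ with gradient Lipschitz constant $L_{ij}$ on the open convex set $\Omega$, adding the quadratic $\tfrac{\lambda_k e^k_j}{2}\|x-x^k\|^2$ makes each summand strongly convex on $\Omega$ with modulus $\lambda_k e^k_j - L_{ij} > 0$. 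Applying Lemma~\ref{lemma2233}(c) to the maximum, each $\tilde f_j$ is strongly convex on $\Omega$, and hence $\tilde F$ is (component-wise) strongly convex there.

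Next, by induction on $k$: for $k=0$ we have $x^0 = \hat x \in S_F(F(\bar y))$, and for the inductive step any candidate $x^{k+1} \in \Omega_k$ satisfies $F(x^{k+1}) \preceq F(x^k) \preceq F(\bar y)$, keeping the iterates in $S_F(F(\bar y)) \subset \Omega$. In particular $\Omega_k \subset \Omega$, it is nonempty (contains $x^k$), it is closed (sublevel sets of the continuous $f_j$), and it is convex by hypothesis of Theorem~\ref{theorem1}. Since $\tilde F$ is strongly convex on $\Omega \supset \Omega_k$, Proposition~\ref{p:iam} gives
\[
\bigcup_{z\in\mathbb{R}^m_+\setminus\{0\}} \argmin_{x\in\Omega_k}\langle \tilde F(x),z\rangle \;=\; \argmin_w\{\tilde F(x):x\in\Omega_k\}.
\]
Hence it suffices to produce a minimizer of the scalarization for a single $z$; I will take $z = e^k \in \mathbb{R}^m_{++}$. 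The map $h(x) := \langle \tilde F(x), e^k\rangle = \sum_j e^k_j f_j(x) + \tfrac{\lambda_k}{2}\|x-x^k\|^2$ (using $\|e^k\|=1$) is continuous and, since each $\inf f_j > -\infty$ and $e^k_j>0$, it is bounded below by a constant plus the coercive term $\tfrac{\lambda_k}{2}\|x-x^k\|^2$. Coercivity together with continuity on the closed set $\Omega_k$ yields the existence of a minimizer, which then lies in the weak Pareto argmin.

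The only real obstacle is the strong convexity argument; once that is in place, Proposition~\ref{p:iam} plus coercivity makes existence essentially automatic. A minor bookkeeping issue is the induction ensuring $\Omega_k \subset \Omega$ so that strong convexity is actually available on the feasible set, but this follows directly from $F(x^{k+1}) \preceq F(x^k)$ and the hypothesis $\hat x \in S_F(F(\bar y))$.
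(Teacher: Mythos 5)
Your proof is correct and follows essentially the same route as the paper's: induction on $k$, scalarization via Proposition~\ref{p:iam}, and coercivity plus closedness of $\Omega_k$ to extract a minimizer that is then a weak Pareto point. The only difference is that you verify strong convexity of the regularized objective inline (the paper defers this to Lemma~\ref{lemma1} and, in the well-definedness proof itself, only uses the easy inclusion of Proposition~\ref{p:iam} --- that a minimizer of a positive scalarization is weakly Pareto optimal --- which needs no convexity), so your extra step is harmless and if anything makes the appeal to Proposition~\ref{p:iam} more rigorous.
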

\begin{proof} 
The proof will be made by induction on $k$. Let $\{x^k\}$ be as in \eqref{method}. By assumption, $\hat{x}\in S_F(F(\bar{y}))$. Thus, we assume that $x^k\in S_F(F(\bar{y}))$ for some $k$. Take $z\in\mathbb{R}_{+}^{m}\backslash\{0\}$ and 
define $\varphi_k(x):=\langle F(x),z\rangle+(\lambda_k/2)\langle e^k, z\rangle\|x-x^k\|^2$.
As $-\infty<\inf_{x\in\mathbb{R}^n} f_j(x)$ for all $j\in I$, the function $\langle F(\cdot),z\rangle$ is bounded below and, taking into account that $\langle e^k, z\rangle>0$, it follows that $\varphi_k$ is coercive. Then, as $\Omega_k$ is closed, there exists $\tilde{x}\in\Omega_k$, such that
$\tilde{x}=\mbox{argmin}_{x\in \Omega_k}\varphi_k(x)$.
Therefore, from Proposition~\ref{p:iam} we can take $x^{k+1}:=\tilde{x}$ and the induction is done, proving the proposition. \qed
\end{proof}
\begin{lemma} \label{lemma1}For all $\tilde{x}\in\mathbb{R}^n$,   $v:=(v_1, \ldots, v_m) \in \mathbb{R}^m_{++}$, $ j\in{I}$ and $\lambda$ 
satisfying $\sup_{i\in I_j}L_{ij}<\lambda v_j$, the functions $f_{ij}+\lambda v_j\|\cdot-\tilde{x}\|^2/2$, $f_{j}+\lambda v_j \|\cdot-\tilde{x}\|^2/2$ and 
$F+(\lambda/2)\|\cdot-\tilde{x}\|^2v$ are strongly convex on $\Omega$.  Moreover, $ \left\langle F(\cdot),z \right\rangle+\lambda\left\langle v, z\right\rangle \left\| \cdot -\tilde{x}\right\|^2/2$
is strongly convex on $\Omega$ for each $ z \in  \mathbb{R}^m_+\backslash\{0\}$.
\end{lemma}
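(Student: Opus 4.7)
The plan is to build up the four claims in the natural order: first $f_{ij}$ plus a quadratic, then the pointwise maximum $f_j$ plus a quadratic, then assemble the vector-valued statement component-wise, and finally scalarize with $z$.

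For the first claim, I would use the Lipschitz continuity of $\grad f_{ij}$ on the open convex set $\Omega$. Writing $h_{ij}(x):=f_{ij}(x)+(\lambda v_j/2)\|x-\tilde x\|^2$, the function $h_{ij}$ is $C^1$ on $\Omega$ with
$\grad h_{ij}(x)=\grad f_{ij}(x)+\lambda v_j(x-\tilde x)$.
For any $x,y\in\Omega$, Cauchy--Schwarz together with the $L_{ij}$-Lipschitz property yields
$\langle\grad f_{ij}(y)-\grad f_{ij}(x),y-x\rangle\geq -L_{ij}\|y-x\|^2$,
hence
$\langle\grad h_{ij}(y)-\grad h_{ij}(x),y-x\rangle\geq(\lambda v_j-L_{ij})\|y-x\|^2$.
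By hypothesis $\lambda v_j-L_{ij}>0$ for every $i\in I_j$, so $\grad h_{ij}$ is strongly monotone on the convex set $\Omega$, which is the standard characterization of strong convexity (equivalent to \eqref{eqq12} here, since $h_{ij}$ is $C^1$ and hence $\partial h_{ij}=\{\grad h_{ij}\}$).

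For the second claim, I rewrite
$f_j(x)+\tfrac{\lambda v_j}{2}\|x-\tilde x\|^2=\max_{i\in I_j}\bigl(f_{ij}(x)+\tfrac{\lambda v_j}{2}\|x-\tilde x\|^2\bigr)=\max_{i\in I_j}h_{ij}(x)$,
and apply Lemma \ref{lemma2233}(c): the pointwise maximum of the finite family $\{h_{ij}\}_{i\in I_j}$, each strongly convex with constant $\lambda v_j-L_{ij}$, is strongly convex with constant $\lambda v_j-\max_{i\in I_j}L_{ij}>0$. For the third claim, observe that the $j$-th component of $F+(\lambda/2)\|\cdot-\tilde x\|^2 v$ is exactly $f_j+(\lambda v_j/2)\|\cdot-\tilde x\|^2$; since this is strongly convex for every $j\in I$ by the previous step, the vector-valued function is component-wise strongly convex, and by the remark in Section \ref{sec1} that strong convexity of $F$ is equivalent to component-wise strong convexity, the conclusion follows.

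For the final scalarized statement, I would simply expand
\[
\bigl\langle F(x),z\bigr\rangle+\tfrac{\lambda\langle v,z\rangle}{2}\|x-\tilde x\|^2
=\sum_{j=1}^{m}z_j\Bigl(f_j(x)+\tfrac{\lambda v_j}{2}\|x-\tilde x\|^2\Bigr),
\]
which uses only $\sum_j z_jv_j=\langle v,z\rangle$. Since every summand is (strongly) convex from step two and the weights $z_j$ are nonnegative with at least one $z_{j_0}>0$ (as $z\in\mathbb R^m_+\setminus\{0\}$), the sum is strongly convex with constant at least $z_{j_0}(\lambda v_{j_0}-\max_{i\in I_{j_0}}L_{ij_0})>0$. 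I do not anticipate a substantial obstacle here: the only mildly delicate point is making precise that Lipschitz-gradient plus sufficiently large quadratic perturbation produces strong convexity, which I handle via the monotonicity inequality above; every other step is a direct application of Lemma \ref{lemma2233}(c) and the componentwise characterization of strong convexity already recorded in the preliminaries.
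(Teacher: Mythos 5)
Your proposal is correct and follows essentially the same route as the paper: the core step is the identical strong-monotonicity computation for $\grad h_{ij}$ via Cauchy--Schwarz and the Lipschitz constant $L_{ij}$, combined with the characterization \eqref{eqq12}. The remaining claims, which the paper dispatches with "follow from the first one," you justify explicitly by Lemma~\ref{lemma2233}(c) and component-wise/nonnegative-combination arguments, exactly as intended.
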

\begin{proof} Take $ j\in{I}$, $i\in I_j$, $\tilde{x}\in\mathbb{R}^n$, $v_j \in \mathbb{R}_{++}$ and define $h_{ij}=f_{ij}+\lambda v_j \|\cdot-\tilde{x}\|^2/2$. Since ${\rm grad}\,h_{ij}(x)={\rm grad}\,f_{ij}(x)+\lambda v_j (x-\tilde{x})$,   
we have $\langle {\rm grad}\,h_{ij}(x)-{\rm grad}\,h_{ij}(y),x-y\rangle=\langle {\rm grad}\,f_{ij}(x)-{\rm grad}\,f_{ij}(y),x-y\rangle+\lambda v_j \|x-y\|^2\ $. Using the Cauchy inequality, last equality becomes 
\begin{eqnarray*}
\langle {\rm grad}\,h_{ij}(x)-{\rm grad}\,h_{ij}(y),x-y\rangle &\geq&-\|{\rm grad}\,f_{ij}(x)-{\rm grad}\,f_{ij}(y)\|\|x-y\|\\
&+&\lambda v_j \|x-y\|^2. 
\end{eqnarray*}
As ${\rm {grad}}f_{ij}$ is Lipschitz on $\Omega$ with constant $L_{ij}$,  
$\langle {\rm grad}\,h_{ij}(x)-{\rm grad}\,h_{ij}(y),x-y\rangle\geq (\lambda v_j  -L_{ij})\|x-y\|^2$ holds.
Hence, the last inequality along with the assumption $\lambda v_j >\sup_{i\in I_j}L_{ij}$ implies that ${\rm grad}\,h_{ij}$ is strongly monotone. Therefore, \eqref{eqq12} implies that $h_{ij}$ is strongly convex, proving the first part of the lemma. The second and third parts of the lemma follow from the first one.
\qed
\end{proof}
Hereafter, $\{x^k\}$ is generated by \eqref{method}. Note that  Proposition~\ref{p:iam} implies that there exists a sequence $\{ z^k\}\subset   \mathbb{R}^m_+\backslash\{0\}$, such that   
\begin{equation}\label{eq:argmin}
x^{k+1}=\mbox{argmin}_{x\in\Omega_k}\psi_k(x),
\end{equation}
where the function $\psi_k:\mathbb{R}^n\rightarrow\mathbb{R}$ is defined by 
\begin{equation}\label{eq:psi}
\psi_k(x):=\left\langle F(x),z^k \right\rangle+\frac{\lambda_k}{2}\left\langle e^k, z^k\right\rangle \left\|x-x^k\right\|^2.
\end{equation}
The solution of the problem in \eqref{eq:argmin} is not altered through the multiplication of $z^k$ by positive scalars. Thus, we can suppose $\|z^k\|=1$ for $k=0,1, \ldots.$

\begin{ProofMainTheo} 
The well-definedness of \eqref{method} follows from  Proposition~\ref{pr:wdef}. As $x_0=\hat{x}\in S_F(F(\bar{y}))\subset\Omega$, \eqref{method}
 implies $\{x^k\}\subset  S_F(F(\bar{y}))$. Let $\bar{x}$ be an accumulation point of $\{x^k\}$. Assume that $\Omega_k$ is convex and, by contradiction, that $\bar{x}$ is not Pareto--Clarke critical of $F$. Then, 
there exists $d\in\mathbb{R}^n$, such that
\begin{equation}\label{ineqqqqq}
f_{i}^{\circ}(\bar{x},d)<0,\quad i\in I.
\end{equation} 
Thus, $d$ is a descent direction for $F$ in $\bar{x}$ and there exists $\delta>0$, such that $F(\bar{x}+td)\prec F(\bar{x})$ for all $t\in(0,\delta]$. Hence, $\bar{x}+td\in\Omega_k$, for $k=0, 1, \dots$.  

Let $\{z_k\}$ be a sequence satisfying \eqref{eq:argmin}. Hence, we can combine Lemma \ref{lemma1} and Remark~\ref{rm.2015} to obtain 
$$
0\in\partial\left(\left\langle F(\cdot),z^k\right\rangle+\frac{\lambda_k}{2}\left\langle e^k, z^k\right\rangle\left\|\cdot-x^k\right\|^2\right)(x^{k+1})+N_{\Omega_k}(x^{k+1}), \quad k=0,1,\dots.
$$
Letting $z^k=(z^k_1,\dots,z^k_m)$ and $e^k=(e^k_1,\dots,e^k_m)$, Remark \ref{remarksubconvex} gives us,
$$
0\in\sum_{j=1}^{m}z^k_j\partial\left(f_j+\frac{\lambda_k}{2} e^k_j\left\|\cdot-x^k\right\|^2\right)(x^{k+1})+N_{\Omega_k}(x^{k+1}), \quad k=0,1,\dots.
$$
The last inclusion implies that there exists $v^{k+1}\in N_{\Omega_k}(x^{k+1})$, such that
$$
0\in\sum_{j=1}^{m}z^k_j\partial\left(f_j+\frac{\lambda_k}{2}e^k_j\left\|\cdot-x^k\right\|^2\right)(x^{k+1})+v^{k+1}, \quad k=0,1,\dots.
$$
Since $\max_{i\in I_j}L_{ij}<\lambda_ke^k_j$, Lemma \ref{lemma1} implies that $f_{ij}+\lambda_k e^k_j\|\cdot-x^k\|^2/2$ and $f_j+\lambda_k e^k_j\|\cdot-x^k\|^2/2$ are strongly convex for all $j\in {I}$, $k=0, 1,\dots$. 
Applying Lemma~\ref{lemma2233}(b), for $I=I_j$ and for the functions $f_{ij}+\lambda_k e^k_j \|\cdot-x^k\|^2/2$ and $f_j+\lambda_k e^k_j\|\cdot-x^k\|^2/2$, for each $j\in{I}$, we obtain
\begin{eqnarray*}
0&=&\sum_{j=1}^{m}z^k_j\left(\sum_{i\in I_j(x^{k+1})}\alpha_{ij}^{k+1}{\rm{grad}}\left(f_{ij}+\frac{\lambda_k e^k_j}{2}\left\|\cdot-x^k\right\|^2\right)(x^{k+1})\right)+v^{k+1},\nonumber\\
& &\sum_{i\in I_j(x^{k+1})}\alpha_{ij}^{k+1}=1,
\end{eqnarray*}
which holds for all $k=0,1,\dots$, with $\alpha_{ij}^{k+1}\geq0$, $i\in I_j(x^{k+1})$. This tells us that
\begin{eqnarray}\label{eqmain}
0&=&\sum_{j=1}^{m}z^k_j\left(\sum_{i\in I_j(x^{k+1})}\alpha_{ij}^{k+1}\left({\rm{grad}}f_{ij}(x^{k+1})+\lambda_k e^k_j (x^{k+1}-x^k)\right)\right)+v^{k+1},\nonumber\\
& &\sum_{i\in I_j(x^{k+1})}\alpha_{ij}^{k+1}=1,
\end{eqnarray}
for all $k=0,1,\dots$. For all $j\in{I}$, let $\{\alpha_{ij}^{k+1}\}\subset\mathbb{R}^m$ be the sequence defined by
$\alpha_{j}^{k+1}=(\alpha_{1j}^{k+1},\alpha_{2j}^{k+1},\dots,\alpha_{mj}^{k+1})$, $\alpha_{ij}^{k+1}=0$, $i\in I_j\backslash I_j(x^{k+1})$,
for all $k=0,1,\dots$. Since $\sum_{i\in I_j(x^{k+1})}\alpha_{ij}^{k+1}=1$, $\|\alpha_j^{k+1}\|_1=1$ for all $k$, where $\|\cdot\|_1$ is the sum norm in 
$\mathbb{R}^n$. Thus, $\{\alpha_{j}^{k+1}\}$ is bounded. 
As $\{x^k\}\subset  S_F(F(\bar{y}))$ and $F$ is continuous on $\Omega$, we have $\bar{x}\in S_F(F(\bar{y}))$. Since $I_j$ is finite we can assume without loss of generality that
$I_j(x^{k_1+1})=I_j(x^{k_2+1})=\cdots=:\tilde{I}_J$,
and \eqref{eqmain} becomes
\begin{eqnarray}\label{eqmain2}
0&=&\sum_{j=1}^{m}z^{k_s}_j\left(\sum_{i\in\tilde{I}_J}\alpha_{ij}^{k_s+1}{\rm{grad}}f_{ij}(x^{k_s+1})+\lambda_{k_s} e^{k_s}_j(x^{k_s+1}-x^{k_s})\right)+v^{k_s+1},\nonumber\\
& &\sum_{i\in\tilde{I}_J}\alpha_{ij}^{k_s+1}=1,\quad s=0,1,\dots.
\end{eqnarray}
From the continuity of $F$ we obtain that $\Omega_k$ is closed. Considering that $x^{k_s}\in\Omega_{k_s}$,  $\Omega_{k_s}$ is convex and $\Omega_{k_s+1}\subset\Omega_{k_s}$, for $s=0, 1, \ldots$, we obtain that
\begin{equation} \label{eq:obar}
\tilde{\Omega}:=\cap_{s=0}^{+\infty}\Omega_{k_s},
\end{equation}
is nonempty, closed, and convex. As $v^{k_s+1}\in N_{\Omega_{k_s}}(x^{k_s+1})$ and $\tilde{\Omega}\subset\Omega_{k_s}$,  \eqref{remark1} implies
\begin{equation}\label{ineqeee}
\langle v^{k_s+1},~x-x^{k_s+1}\rangle\leq0,\qquad x\in\tilde{\Omega}, \qquad s=0,1,\dots.
\end{equation}
On the other hand, let $\{z^{k_s+1}\}$,  $\{x^{k_s+1}\}$, $\{e^{k_s+1}_j\}$,  $\{\lambda_{k_s+1}\}$, and $\{\alpha_j^{k_s+1}\}$ be the subsequences of $\{z^{k+1}\}$,  $\{x^{k+1}\}$,  $\{e^{k+1}_j\}$,  $\{\lambda_{k+1}\}$, and $\{\alpha_j^{k+1}\}$, respectively, such that 
$\lim_{s\to+\infty}(z^{k_s+1},x^{k_s+1},e_j^{k_s+1},\lambda_{k_s+1},\alpha_j^{k_s+1})=(\overline{z},\overline{x},\overline{e}_j,\hat{\lambda},\bar{\alpha}_j)$.
This fact along with \eqref{eqmain2}, implies that $\lim_{s\to+\infty}v^{k_s+1}=\bar{v}$. From \eqref{ineqeee}, $\bar{v}\in N_{\tilde{\Omega}}(\bar{x})$. Hence, in view of \eqref{eqmain2}, we have
$0=\sum_{j=1}^{m}{\bar z}_j\sum_{i\in\tilde{I}_J}\bar{\alpha}_{ij}{\rm{grad}}f_{ij}(\bar{x})+\bar{v}$ and $ \sum_{i\in\tilde{I}_J}\bar{\alpha}_{ij}=1$.
Let $x\in\tilde{\Omega}$. Taking $u_j=\sum_{i\in\tilde{I}_J}\bar{\alpha}_{ij}{\rm{grad}}f_{ij}(\bar{x})$, we have
\begin{equation} \label{eq:feee}
0=\sum_{j=1}^{m}{\bar z}_j\langle u_j,x-\bar{x}\rangle+\langle \bar{v},x-\bar{x}\rangle.
\end{equation}
As $\bar{x}+td\in\Omega_k$, for all $k=0, 1, \ldots$, the definition of $\tilde{\Omega}$ in \eqref{eq:obar} implies that $\bar{x}+td\in\tilde{\Omega}$, $t\in(0,\delta]$. Since $u_j=\sum_{i\in\tilde{I}_J}\bar{\alpha}_{ij}{\rm{grad}}f_{ij}(\bar{x})$ and 
$\sum_{i\in\tilde{I}_J}\bar{\alpha}_{ij}=1$, Lemma~\ref{lemma2233} (a) and (b) implies that $u_j\in \partial^{\circ}f_j(\bar{x})$. Hence, using that  $\bar{v}\in N_{\tilde{\Omega}}(\bar{x})$ and definition of  $f_j^{\circ}(\bar{x},d)$, equality \eqref{eq:feee} with $x=\bar{x}+td$ yields
$
0\leq\sum_{j=1}^{m}{\bar z}_j\langle u_j,d\rangle\leq \sum_{j=1}^{m}{\bar z}_j f_j^{\circ}(\bar{x},d).
$
Thus, there exists $j\in I$ such that $f_j^{\circ}(\bar{x},d)\geq0$, which contradicts \eqref{ineqqqqq}. Therefore, $\bar{x}$ is Pareto--Clarke critical point of $F$. 
 \end{ProofMainTheo} 

Now let us introduce some conditions that will guarantee that $\{x^k\}$ converges to a point $x^*\in U^*$. Suppose that 
\begin{description}
\item[(H1)] $U=\{y\in\mathbb{R}^n~:~ F(y) \preceq F(x^k), ~ k=0,1, \ldots\}\neq \varnothing$;
\item[(H2)] there exists $c\in \mathbb{R}$ such that the following conditions hold:
 \begin{description}
 \item[(a)] $S_F(ce):=\left\{x\in\mathbb{R}^n~:~ F(x)\preceq ce\right\} \neq \varnothing$ and $S_F(ce) \subsetneq S_{F}(F(\bar{y}))$;
 \item[(b)] $S_F(ce)$ is convex and $F$ is convex on $S_{F}(ce)$, where $e:=(1, \ldots, 1 )\in \mathbb{R}^m$;
 \end{description}
\item[(H3)] there exists $\delta>0$ such that for all $z\in\mathbb{R}^m_+\backslash \{0\}$, $x\in S_{F}(F(\bar{y}))\,\backslash\, S_{F}(ce)$ and $w_z(x)\in\partial^{\circ}\left(\langle F(\cdot),z\rangle\right)(x)+N_{\Omega_k}(x)$, it holds that $\|w_z(x)\|>\delta>0$.
\end{description}
In general, the set $U$ defined in assumption (H1) may be an empty set. To guarantee that $U$ is nonempty, an additional assumption on the sequence $\{x^{k}\}$ is needed. In the next remark we give such a condition.
\begin{remark} \label{eq:noset}
If the sequence $\{x^{k}\}$ has an accumulation point, then $U\neq \varnothing$, i.e., assumption (H1) holds. Indeed, let $\bar{x}$ be an accumulation point of the sequence 
$\{x^{k}\}$. Then, there exists a subsequence $\{x^{k_j}\}$ of $\{x^{k}\}$ which converges to $\bar{x}$. Since $F$ is continuous,  
$\{F(x^{k})\}$ has $F(\bar{x})$ as an accumulation point. Using the definition of $\{x^{k}\}$ in \eqref{method}, we conclude that  
$\{F(x^{k})\}$ is a decreasing sequence. Hence, the usual arguments easily show that the whole sequence $\{F(x^{k})\}$ converges to $F(\bar{x})$  
and $\bar{x}\in U$, i.e., $U\neq \varnothing$.  
\end{remark}
Next, we present a function satisfying the assumptions of Theorem~\ref{theorem1}, as well as {\rm{(H1)}}, {\rm{(H2)}} and {\rm{(H3)}}, when $m=2$, $j=2$ and ${I}=I_{j}= 2$.  
\begin{example} \label{example}
  Take $0<\epsilon<0.4$, $\Omega=(\epsilon, +\infty)$, ${I}:=\{1,2\}$, and $\bar y=2.718 \ldots $ with $\ln \bar y=1$. Let $F:\mathbb{R}\to\mathbb{R}^2$ be defined by $F(x)= (0,0)$ for $x\in \mathbb{R} \backslash \mathbb{R}_{++}$ and $F(x):=(f_1(x),f_2(x))$, where 
	$f_j(x):=\max_{i\in {I} }f_{ij}(x)$ for $j\in{I}$ and 
$f_{11}(x)=\ln x+1/x$, $f_{21}(x)=\ln x-1/x$, $f_{12}(x)=2\sqrt{x}+1/x$, $f_{22}(x)=2\sqrt{x}-1/x$, 
for all $x\in \mathbb{R}_{++}$. Note that $f_{1j}$, $f_{2j}$ are continuously differentiable on $\Omega$ and continuous on $\bar {\Omega}$, for all  $j\in{I}$. Since $f''_{1j}$, $f''_{2j}$ are bounded on $\Omega$, we conclude that 
$f'_{1j}$, $f'_{2j}$ are Lipschitz on $\Omega$, for $j\in{I}$. Since $\max\{a,b\}=(a+b)/2+|a-b|/2$, for all $a, b\in  \mathbb{R}$, we conclude $f_1(x)=\ln x+1/x$ and $f_2(x)=2\sqrt{x}+1/x$. Therefore, 
$
F(x)=\left(\ln x+1/x, ~ 2\sqrt{x}+1/x\right)$, $x\in \mathbb{R}_{++}$. 
It is easy to see that $F$ is nonconvex and $\{x\in\Omega~:~ F(x)\preceq (\zeta, \zeta)\} $ is convex and nonempty, for all $\zeta\geq 1$. Consider the following multiobjective optimization problem  
$\min_{w} \{ F(x) ~:~ x\in \Omega\}$,
which has $x^*=1$ as the unique solution. In fact, $F(1) \prec  F(x)$, for all $x\in \mathbb{R}_{++}$. Hence, we obtain that $-\infty<\inf_{x\in\mathbb{R}}f_j(x)$, for $j\in{I}$. Since $0<\epsilon<\bar y$, we conclude that   
$S_F(F(\bar y))\subset\Omega$ and $S_F(F(\bar y))\neq \varnothing$.  
Therefore, taking into account that $\Omega_k$ is convex, $F$ satisfies all the assumptions of Theorem~\ref{theorem1}.    
We are going to prove that $F$ also satisfies {\rm{(H1)}}, {\rm{(H2)}}, and {\rm{(H3)}}. Since $F(1) \prec  F(x)$, for all $x\in \mathbb{R}_{++}$, we conclude that  
 $F$ satisfies {\rm{(H1)}}. Let $c=f_2(2)$ and note that  
$
 (0.6,  ~2] \subset  S_F(ce) \subsetneq  [0.5, ~2.7] \subset S_F(F(\bar y)).
 $
But this tells us, in particular, that $F$ satisfies {\rm{(H2)}}. Finally, we are going to prove that $F$ satisfies {\rm{(H3)}}. First, note that 
$
S_F(F(\bar y))\backslash S_F(ce)\subset [0.47, ~0.57)\cup(2, ~2.72].
$
For each point $z=(z_1, z_2)\in\mathbb{R}^2_+\backslash \{0\}$ with $\|z\|_{1}:=z_1+z_2=1$, take $x\in S_F(F(\bar y))\backslash S_F(ce)$ and 
$w_z(x)\in\partial^{\circ}\left(\langle F(\cdot),z\rangle\right)(x)+N_{\Omega_k}(x)$. Hence, there exists $v\in N_{\Omega_k}(x)$, such that 
\begin{equation} \label{eq:exeq1}
w_z(x)=z_1\varphi_1(x)+z_2\varphi_2(x)+v,
\end{equation}
where $\varphi_1(x):=(1/x-1/x^2)$ and $\varphi_2(x):=(1/\sqrt{x}-1/x^2)$.
First, we assume that $x\in[0.47, ~0.57)$. In this case,  $ N_{\Omega_k}(x)\subset\mathbb{R}_{-}$ and, using the above equality, we obtain
$w_z\leq z_1\varphi_1(x)+z_2\varphi_2(x)$.
 Since $(x-1)/x^2<-0,4/(0,47)^2$ and $(x^2-\sqrt{x})/x^{3/2}<-0,2/(0,47)^{3/2}$, we have
$
w_z(x)\leq z_1\varphi_1(x)+z_2\varphi_2(x)<-0,4/(0,47)^2z_1-0,2/(0,47)^{3/2}z_2.
$
Then, for all $w_z(x)\in\partial^{\circ}\left(\langle F(\cdot),z\rangle\right)(x)+N_{\Omega_k}(x)$,  
\begin{equation} \label{eq:pex2}
|w_z(x)|>\frac{0,4}{(0,47)^2}z_1+\frac{0,27}{(0,47)^{3/2}}z_2>\|z\|_{1}\frac{0,27}{(0,47)^{3/2}}=\frac{0,27}{(0,47)^{3/2}}, 
\end{equation}
for $x\in[0.47, ~0.57)$. Assuming that $x\in(2, ~2.72]$, it follows that  $ N_{\Omega_k}(x)\subset\mathbb{R}_{+}$. Hence, it follows from \eqref{eq:exeq1} that 
$w_z(x)\geq z_1\varphi_1(x)+z_2\varphi_2(x)$.
From $(x-1)/x^2>1/(2,72)^2$ and $\left(1/\sqrt{x}-1/x^2\right)>2,3/(2,72)^{3/2}$ we obtain 
$
w_z(x)\geq z_1\varphi_1(x)+z_2\varphi_2(x)>1/(2,72)^2z_1+2,3/(2,72)^{3/2}z_2.
$
Thus, for all $w_z(x)\in\partial^{\circ}\left(\langle F(\cdot),z\rangle\right)(x)+N_{\Omega_k}(x)$,
$$
|w_z(x)|>\frac{1}{(2,72)^2}z_1+\frac{2,3}{(2,72)^{3/2}}z_2>\|z\|_{1}\frac{1}{(2,72)^2}=\frac{1}{(2,72)^2}, \quad x\in(2, ~2.72].
$$
Since $S_F(F(\bar y))\backslash S_F(ce)\subset [0.47, ~0.57)\cup(2, ~2.72] $, combining \eqref{eq:pex2} with the last inequality, we conclude that $F$ satisfies {\rm{(H3)}} with $\delta= 1/(2,72)^2$.
\end{example}
\begin{lemma}  \label{eq:c2}
 Assume that {\rm (H1)}, {\rm (a)} in {\rm (H2)}, and {\rm (H3)} hold and $\lambda_k$ satisfies \eqref{lambda}.  
Then, after a finite number of steps the proximal iterates lies in $S_F(ce)$, i.e., there exists $k_0$ such that $\{x^k\}\subset S_F(ce)$, for all $k\geq k_0$.
\end{lemma}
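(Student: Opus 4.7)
The strategy is proof by contradiction. First observe that $S_F(ce)$ is forward-invariant under \eqref{method}: if $x^k\in S_F(ce)$ then $x^{k+1}\in\Omega_k$ forces $F(x^{k+1})\preceq F(x^k)\preceq ce$, so $x^{k+1}\in S_F(ce)$ as well. It therefore suffices to exclude the scenario in which $x^{k+1}\notin S_F(ce)$ for every $k$, which I assume henceforth.

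The first step is to extract a subgradient identity from the optimality condition for \eqref{eq:argmin}. Combining Lemma~\ref{lemma1}, Lemma~\ref{lema234556} and Remark~\ref{rm.2015}, and observing that the quadratic proximal term in \eqref{eq:psi} is continuously differentiable with gradient $\lambda_k\langle e^k,z^k\rangle(x^{k+1}-x^k)$ at $x^{k+1}$, one obtains
$$-\lambda_k\langle e^k,z^k\rangle\,(x^{k+1}-x^k)\in\partial^{\circ}\langle F(\cdot),z^k\rangle(x^{k+1})+N_{\Omega_k}(x^{k+1}).$$
Since $\{x^k\}\subset S_F(F(\bar y))$ by Theorem~\ref{theorem1} and $x^{k+1}\notin S_F(ce)$ by the contradictory hypothesis, condition {\rm (H3)} applies and yields $\lambda_k\langle e^k,z^k\rangle\|x^{k+1}-x^k\|>\delta$. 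The bounds $\lambda_k\le\bar\lambda$ and $\langle e^k,z^k\rangle\le\|e^k\|\|z^k\|=1$ then produce the uniform estimate $\|x^{k+1}-x^k\|>\delta/\bar\lambda$ valid for every $k$.

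To reach the contradiction I will show that $\|x^{k+1}-x^k\|\to 0$. Taking $x=x^k\in\Omega_k$ as a test point in \eqref{eq:argmin} gives
$$\frac{\lambda_k\langle e^k,z^k\rangle}{2}\|x^{k+1}-x^k\|^2\le\langle F(x^k)-F(x^{k+1}),z^k\rangle\le\sum_{j=1}^{m}\bigl(f_j(x^k)-f_j(x^{k+1})\bigr),$$
where the second inequality uses $z^k\in\mathbb{R}^m_+$, $\|z^k\|=1$, and the monotonicity $F(x^{k+1})\preceq F(x^k)$ enforced by $x^{k+1}\in\Omega_k$. Since each scalar sequence $\{f_j(x^k)\}$ is nonincreasing and bounded below, summing over $k$ telescopes to a finite total. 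From \eqref{lambda} one has $\langle e^k,z^k\rangle\ge\bar\mu\|z^k\|_1\ge\bar\mu$ and $\lambda_k>\max_{j}\max_{i\in I_j}L_{ij}/\bar\mu$, so $\lambda_k\langle e^k,z^k\rangle$ is bounded away from zero; hence $\sum_k\|x^{k+1}-x^k\|^2<\infty$, forcing $\|x^{k+1}-x^k\|\to 0$ and contradicting the estimate of the previous paragraph. The principal technical hurdle is the correct splitting of $\partial^{\circ}\psi_k$, which rests on the calculus rule of Lemma~\ref{lema234556} combined with the convexity of $\psi_k$ delivered by Lemma~\ref{lemma1} through the threshold $\lambda_k e^k_j>\max_{i\in I_j}L_{ij}$.
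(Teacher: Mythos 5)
Your proof is correct and follows essentially the same route as the paper's: the same optimality inclusion $-\lambda_k\langle e^k,z^k\rangle(x^{k+1}-x^k)\in\partial^{\circ}\langle F(\cdot),z^k\rangle(x^{k+1})+N_{\Omega_k}(x^{k+1})$, the same appeal to (H3) for a uniform lower bound on the steps, and the same comparison $\tfrac{\lambda_k}{2}\langle e^k,z^k\rangle\|x^{k+1}-x^k\|^2\le\langle F(x^k)-F(x^{k+1}),z^k\rangle$ to force $\|x^{k+1}-x^k\|\to 0$. The only (harmless) variations are that you spell out the telescoping of the nonincreasing, bounded-below sequences $\{f_j(x^k)\}$ where the paper simply cites (H1) to get $\|F(x^k)-F(x^{k+1})\|\to 0$, and you add the explicit lower bound $\langle e^k,z^k\rangle\ge\bar\mu$, which the paper leaves implicit.
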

\begin{proof} 
Condition {\rm (a)} in {\rm (H2)} implies that $S_F(ce)\neq \varnothing$. Suppose, by contradiction, that $x^k\in S_F(F(\bar y))\backslash S_F(ce)$ for all $k$. Let $\{z_k\}$ be a sequence satisfying \eqref{eq:argmin}. 
Hence, we can combine Lemma~\ref{lemma1}, Remark~\ref{rm.2015} and Lemma~\ref{lema234556} to obtain 
$0\in\partial^{\circ}\left(\left\langle F(\cdot),z^k\right\rangle\right)(x^{k+1})+(\lambda_k/2)\left\langle e^k, z^k\right\rangle\left(x^{k+1}-x^k\right)+N_{\Omega_k}(x^{k+1})$, $k\geq 0$.
Then, 
$-(\lambda_k/2)\left\langle e^k, z^k\right\rangle\left(x^{k+1}-x^k\right)\in\partial^{\circ}\left(\left\langle F(\cdot),z^k\right\rangle\right)(x^{k+1})+N_{\Omega_k}(x^{k+1})$, for $k\geq 0$.
As $x^{k+1}\in S_F(F(\bar y))\backslash S_F(ce)$, (H3) along with the last inclusion gives us
\begin{equation}\label{eq9898}
\frac{\lambda_k}{2}\left\langle e^k, z^k\right\rangle\left\|x^{k+1}-x^k\right\|>\delta,\quad k\geq 0.
\end{equation}
Hence, $(\lambda_k/2)\left\langle e^k, z^k\right\rangle \left\|x^{k+1}-x^k\right\|^2\leq \|F(x^{k})- F(x^{k+1})\|$ holds, in view of \eqref{eq:argmin} and \eqref{eq:psi}, and $\| z^k\|=1$, 
 for all $k\geq0$. Thus, from {\rm (H1)}, $ \|F(x^{k})- F(x^{k+1})\|\to0$, contradicting~\eqref{eq9898}.
\end{proof}

\begin{lemma}  \label{eq:c1}
Assume that {\rm (H1)} and {\rm (H2)} hold and $\lambda_k$ satisfies  \eqref{lambda}. If $x^k\in S_F(ce)$ for some $k$, then $\{x^k\}$ converges to a point $x^*\in U^*.$
\end{lemma}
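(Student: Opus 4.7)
The plan is to prove Fejér monotonicity of $\{x^k\}$ with respect to $U$, then identify the unique limit as a weak Pareto solution. Since $F(x^{k+1})\preceq F(x^k)$ by construction of $\Omega_k$, once $x^{k_0}\in S_F(ce)$ the whole tail $\{x^k\}_{k\geq k_0}$ stays in $S_F(ce)$. On this convex set $F$ is convex by {\rm (H2)(b)}; in particular $U\subset S_F(ce)$ (since $F(y)\preceq F(x^k)\preceq ce$ for $y\in U$), and every scalarization $\langle F(\cdot),z^k\rangle$ is convex on $S_F(ce)$.

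For the Fejér step, Lemma~\ref{lemma1} makes $\psi_k$ from \eqref{eq:psi} strongly convex on $\Omega_k$ with modulus $\lambda_k\langle e^k,z^k\rangle$, so its minimization \eqref{eq:argmin} at $x^{k+1}$ gives, for every $y\in\Omega_k$,
\[
\psi_k(y)\geq \psi_k(x^{k+1})+\tfrac{\lambda_k\langle e^k,z^k\rangle}{2}\|y-x^{k+1}\|^2.
\]
Substituting the definition of $\psi_k$, taking $y\in U\subset\Omega_k$, and using $\langle F(y)-F(x^{k+1}),z^k\rangle\leq 0$ (from $F(y)\preceq F(x^{k+1})$ with $z^k\succeq 0$), this reduces, after cancelling the positive factor $\lambda_k\langle e^k,z^k\rangle/2$, to the classical Fejér inequality
\[
\|y-x^{k+1}\|^2\leq \|y-x^k\|^2-\|x^{k+1}-x^k\|^2,\qquad y\in U,\ k\geq k_0.
\]
Hence $\{x^k\}$ is bounded, $\|x^{k+1}-x^k\|\to 0$, and $\|y-x^k\|$ converges for every $y\in U$.

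Let $x^*$ be any accumulation point; by Theorem~\ref{theorem1}, $x^*$ is Pareto--Clarke critical. To upgrade to $x^*\in U^*$, I would reuse the limiting procedure of Theorem~\ref{theorem1}: passing to the limit in the first-order condition $w^{k+1}+\lambda_k\langle e^k,z^k\rangle(x^{k+1}-x^k)+v^{k+1}=0$ along a converging subsequence, the middle term vanishes, outer semicontinuity of the convex subdifferential on $S_F(ce)$ yields $\bar w\in\partial\langle F(\cdot),\bar z\rangle(x^*)$, and the argument used around \eqref{ineqeee} provides $\bar v\in N_{\tilde\Omega}(x^*)$ with $\bar w+\bar v=0$, where $\tilde\Omega:=\bigcap_s\Omega_{k_s}$. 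The subgradient inequality combined with Proposition~\ref{p:iam} (using $\bar z\in\mathbb{R}^m_+\setminus\{0\}$) then identifies $x^*$ as a weak Pareto of $F$ on $\tilde\Omega$. Since $\{F(x^k)\}$ is componentwise decreasing with limit $F(x^*)$, one checks $\tilde\Omega=\{y:F(y)\preceq F(x^*)\}$; any hypothetical $y\in\mathbb{R}^n$ with $F(y)\prec F(x^*)$ would lie in $\tilde\Omega$ and contradict this, so $x^*\in U^*$. Moreover $F(x^*)\preceq F(x^k)$ for all $k$, so $x^*\in U$, and the Fejér inequality with $y=x^*$ makes $\{\|x^k-x^*\|\}$ a nonincreasing convergent sequence with a zero subsequential limit, so $x^k\to x^*$. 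The most delicate point is transferring the normal-cone information $v^{k+1}\in N_{\Omega_k}(x^{k+1})$ to $\bar v\in N_{\tilde\Omega}(x^*)$ in the limit and identifying $\tilde\Omega=\{y:F(y)\preceq F(x^*)\}$, which together force the Pareto--Clarke critical $x^*$ into $U^*$ despite the potentially nonconvex behavior of $F$ outside $S_F(ce)$.
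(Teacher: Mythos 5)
Your proof is correct in outline, but it takes a genuinely different route from the paper. The paper's entire proof consists of your first step—once $x^{k_0}\in S_F(ce)$ the tail stays in $S_F(ce)$, where $F$ is convex by (H2)—followed by a direct appeal to Theorem~3.1 of \cite{Bonnel2005} (with $S=S_F(ce)$, $C=\mathbb{R}^m_+$, and (H1) playing the role of their assumption (A)). Everything after your first paragraph is, in effect, a self-contained re-derivation of the content of that cited theorem in this setting: the Fej\'er inequality $\|y-x^{k+1}\|^2\leq\|y-x^k\|^2-\|x^{k+1}-x^k\|^2$ for $y\in U$ obtained from strong convexity of $\psi_k$, the identification $\tilde\Omega=U=\{y:F(y)\preceq F(x^*)\}$, the scalarization via Proposition~\ref{p:iam}, and the localization trick showing that a weak Pareto point of $F$ on $\tilde\Omega$ is automatically in $U^*$ because any $y$ with $F(y)\prec F(x^*)$ would itself belong to $\tilde\Omega$. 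What your version buys is a verifiable, citation-free argument; what the paper's version buys is brevity. A few details you should pin down to make your write-up airtight: (i) $\Omega_k$ is convex for $k\geq k_0$ because $\Omega_k\subset S_F(ce)$ and $F$ is convex on the convex set $S_F(ce)$ (this is needed both for the optimality condition and for the growth inequality at the constrained minimizer); (ii) the subgradients $w^{k+1}$ are bounded (e.g.\ via Lemma~\ref{lemma2233}(b), they lie in convex hulls of the continuous gradients $\operatorname{grad}f_{ij}$ over the bounded set $\{x^k\}$), which is what lets you extract the convergent subsequence and define $\bar v$ as a limit; and (iii) the convex subdifferential you invoke is that of a function convex only on $S_F(ce)$, so outer semicontinuity should be justified there (or replaced by the Clarke subdifferential of the max structure, as in the proof of Theorem~\ref{theorem1}). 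None of these is a gap in the idea, only in the bookkeeping.
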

\begin{proof} By hypothesis, $x^{k}\in S_F(ce)$ for some $k$, i.e., there exists $k_0$ such that $F(x^{k_0})\preceq ce$. Hence, the definition of $\{x^k\}$ in \eqref{method} 
implies that $\{x^k\}\subset S_F(ce)$, for all $k\geq k_0$. Therefore, using \eqref{lambda}, {\rm (H1)}, and {\rm (H2)}, the result follows by applying  
\cite[ Theorem~{3.1}]{Bonnel2005} with $F_0=F$, $S=S_F(ce)$,   $X=\mathbb{R}^n$,    $C=\mathbb{R}^m_+$, and using {\rm (H1)} instead of (A). 
\end{proof}
\begin{theorem}  \label{th:cont}
Under the conditions $(H1)$, $(H2)$, and $(H3)$, the sequence $\{x^k\}$ generated by \eqref{method} converges to a point $x^*\in U^*$.
\end{theorem}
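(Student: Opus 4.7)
The plan is to observe that this theorem is essentially a corollary of the two preceding lemmas, and the proof amounts to chaining them together. No new argument is needed; the work has already been done in Lemma~\ref{eq:c2} and Lemma~\ref{eq:c1}.

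First, I will verify that the hypotheses of Lemma~\ref{eq:c2} are satisfied by the theorem's assumptions. Lemma~\ref{eq:c2} requires (H1), part (a) of (H2), (H3), and the stepsize condition \eqref{lambda}; all of these are directly available since (H1), (H2), (H3) are assumed in the theorem, and \eqref{lambda} is built into the very definition of the proximal method \eqref{method}. Applying Lemma~\ref{eq:c2} therefore yields an index $k_0\in \mathbb{N}$ such that $x^{k}\in S_F(ce)$ for all $k\geq k_0$. In particular, $x^{k_0}\in S_F(ce)$.

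Next, I will feed this conclusion into Lemma~\ref{eq:c1}. That lemma requires (H1), the full (H2), and \eqref{lambda}, together with the existence of some iterate lying in $S_F(ce)$. The first three items are again supplied by the theorem's assumptions, and the fourth is exactly the output of the previous step, namely $x^{k_0}\in S_F(ce)$. Invoking Lemma~\ref{eq:c1} then gives the desired conclusion that $\{x^k\}$ converges to a point $x^*\in U^*$, completing the proof.

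The only ``obstacle'' worth flagging is really a bookkeeping one: I should make sure that condition~(a) of (H2) used in Lemma~\ref{eq:c2} and the full condition~(H2) used in Lemma~\ref{eq:c1} are compatible with invoking both lemmas on the same generated sequence, which they are, since (H2) implies (a) of (H2) and the generated sequence is fixed throughout. All real analytic content—the descent property obtained from \eqref{eq:argmin}, the uniform lower bound on the subgradient norms away from $S_F(ce)$, and the Fej\'er-type convergence argument borrowed from \cite[Theorem~3.1]{Bonnel2005}—has already been dispatched in Lemmas~\ref{eq:c2} and~\ref{eq:c1}, so the proof of Theorem~\ref{th:cont} is genuinely just the two-step combination sketched above.
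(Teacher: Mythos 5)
Your proposal is correct and follows exactly the paper's own proof, which likewise just combines Lemma~\ref{eq:c2} (finitely many steps until the iterates enter $S_F(ce)$) with Lemma~\ref{eq:c1} (convergence once an iterate lies in $S_F(ce)$). Your additional bookkeeping about the hypotheses of the two lemmas is accurate and harmless.
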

\begin{proof}
It follows by combination of Lemma \ref{eq:c2} with Lemma \ref{eq:c1}.
\end{proof}
\begin{remark}
As the function in Example 3.1 is not convex, the analysis in \cite{Bonnel2005} does not allow us to conclude that $\{x^k\}$ converges to a minimizer. 
However, as the function satisfies (H1) to (H3), Theorem~\ref{th:cont} guarantees that $\{x^k\}$ converges.
\end{remark}

\section{Conclusions} \label{sec:conclusion}
The main contribution of this paper is the extension of the convergence analysis of the proximal method \eqref{method}, which has been studied in \cite{Bonnel2005}, in order to increase the range of its applications; see Example~\ref{example}. 
The proximal point method is indeed a conceptual scheme that transforms a given problem into a sequence of better behaved subproblems, which has been proven to be an efficient tool in several instances through the methods that can be derived from it 
(e.g., Augmented Lagrangians, both classical or generalized). In this sense, the proximal point method is basic and we expect that the results of the present paper will become an additional step toward solving general multiobjective optimization problems.  
Finally, it is worth noting that in our analysis of the preference relation was induced by a cone with a nonempty interior. Various vector optimization problems can be formalized by using convex ordering cones with empty interiors, this   
restriction could open some new perspectives in the point of view of the numerical methods, see  \cite{BaoMordukhovich2010,GradPop2014,Mordukhovich2006}. We foresee new developments in this direction in the near future.




\end{document}